\let\chapter\section               % Conflict fix found on this site.
\definecolor{linkblue}{rgb}{0,0.2,0.6}
\newcommand{\neturl}[1]{\href{#1}{{\sffamily{\texttt{#1}}}}}
\newtheorem{theorem}{Theorem}[section]
\newtheorem{proposition}[theorem]{Proposition}
\newtheorem{lemma}[theorem]{Lemma}
\theoremstyle{definition}
\newcommand{\CC}{\mathbb{C}}
\newcommand{\RR}{\mathbb{R}}
\newcommand{\rank}{\operatorname{rank}}
\newcommand{\sgn}{\operatorname{sgn}}
\newcommand{\sL}{\mathfrak{sl}}
\begin{document}

\pagenumbering{arabic}
\title{A new proof of a formula for the type $A_2$ fusion rules}
\date{\today}

\author{Amy Barker}

\author{David Swinarski}

\author{Lauren Vogelstein}

\author{John Wu}

\address{Department of Mathematics, Fordham University, New York, NY 10023, USA}
\email{dswinarski@fordham.edu}

\begin{abstract}We give a new proof of a formula for the fusion rules
  for type $A_2$ due to B{\'e}gin, Mathieu, and Walton.  Our approach is
  to symbolically evaluate the Kac-Walton algorithm.  
\end{abstract}

\maketitle

\section{Introduction}
For an affine Lie algebra $\widehat{\mathfrak{g}}$, the irreducible integrable $\widehat{\mathfrak{g}}$-modules are
classified by a highest weight and an integer $\ell$ called the
level.  The tensor product on $\widehat{\mathfrak{g}}$-modules is
additive with respect to the level.  There exists a second product 
called the fusion product, which is level-preserving.

The fusion rules of an affine Lie algebra are the full set of
structure constants $N_{\lambda,\mu}^{(\ell) \nu}$ that describe how the fusion product of two
irreducible integrable level $\ell$ $\widehat{\mathfrak{g}}$-modules decomposes into irreducibles.  
Kac and Walton independently found an algorithm for computing the
fusion rules.  The Kac-Walton algorithm only uses the combinatorics of
the underlying root system, and hence, this algorithm can be used to
define a product on $\mathfrak{g}$-modules as well
as $\widehat{\mathfrak{g}}$-modules.  In this case, the algorithm is
highly similar to the
Racah-Speiser algorithm for tensor product decompositions, which is an
algorithmic version of a formula that is variously attributed to 
Brauer, Klimyk, Steinberg, and Racah; see Section \ref{Kac Walton
  section} for more discussion.  

For Type $A_1$, the fusion rules for any level are easily
computed.  For Type $A_2$, B{\'e}gin, Mathieu, and Walton give a closed formula for
the fusion rules for any level in \cite{BMW}.  For other
root systems, the fusion rules are known in some special cases.  For
instance, when the root system rank and level are small, the fusion rules can be
computed using a computer; if the level is small, level-rank
duality may be used; and if the weights have special properties, 
additional formulas are known \cites{MorseSchilling2012,SchillingShimozono2001,Tudose2002}.  But at the time of this writing,
we do not know of any other root systems besides $A_1$ and $A_2$ where the fusion
rules are fully known for all weights and levels. 
%, except in special cases%
%when the level is small.  

B{\'e}gin, Mathieu, and Walton derive their formula for the
fusion rules of type $A_2$ using another formula called the depth
rule.  At the time their paper was published, the depth rule was only
a conjecture, but it has since been proven in \cite{FeingoldFredenhagen2008}.  Unfortunately,
extending the approach used in \cite{BMW} to other root systems has proven difficult.

In this paper, we give a new proof of B{\'e}gin, Mathieu, and Walton's formula for the
fusion rules of type $A_2$.  Our approach is to symbolically evaluate
the Kac-Walton algorithm using the computer algebra system
\texttt{Macaulay2}.  We hope that our approach can be
applied to obtain fusion rules for some other root systems of small
rank.  

We briefly mention three applications of fusion rules.

One application of the fusion rules is to compute the ranks of vector
bundles of conformal blocks \cite{Beauville}.  Write
$\mbox{}^{*}$ for the involution on the weight lattice given by
$-w_0$, where $w_0$ is the
longest word in the Weyl group.  The ranks of conformal blocks on
$\overline{M}_{0,3}$ are related to fusion coefficients by
\[ \rank \mathbb{V}(\mathfrak{g}, \ell,(\lambda,\mu,\nu)) = N_{\lambda,\mu}^{(\ell) \, \nu^{*}}.
\]
Then, for any $g$
and $n$ with $3g-3+n \geq 0$, factorization of vector bundles of conformal blocks allows the rank
of any conformal block $\mathbb{V}(\mathfrak{g},
\ell,\vec{\lambda})$ on $\overline{\mathcal{M}}_{g,n}$ to be computed
recursively with the fusion rules as the seeds of this recursion.

As a second application, the fusion rules are related to the quantum
cohomology of Grassmannians, at least in type A.  Specifically, the
ring $\mathcal{F}(\widehat{\mathfrak{sl}}(n))_{k}$ with
generators indexed by the irreducible integrable level $k$
$\widehat{\mathfrak{sl}}(n)$-modules 
and structure constants given by the fusion rules is a quotient of the
small quantum cohomology ring $qH^{\bullet}(\operatorname{Gr}_{k,n+k})$
(\cite{KorffStroppel}).

Finally, since the fusion coefficients $N_{\lambda,\mu}^{(\ell),\nu}$
are always dominated by the tensor coefficients
$N_{\lambda,\mu}^{\nu}$, we may view the fusion product as a truncated
tensor product.  It seems worth investigating whether fusion products
could be used to approximate tensors in scientific or engineering applications.  

\subsection{Outline of the paper}
In Section \ref{BMW section} we present a formula for the fusion rules
due to B{\'e}gin, Mathieu, and Walton.  In Section \ref{Kac Walton
  section} we review the Racah-Speiser and Kac-Walton algorithms.  In
Section \ref{our proof section} we give our proof of the B{\'e}gin-Mathieu-Walton formula.

\subsection{Acknowledgements}   
The first and third authors were supported by scholarships and summer
research funding from the Clare Boothe Luce Foundation.  The fourth author was
supported by summer research funding from the dean of Fordham College
at Lincoln Center.  The second author would like to thank John Cannon
and the \texttt{Magma} group for hosting a visit to the University of
Sydney during which the fusion rules were first implemented in
\texttt{Macaulay2}.  The second author would also like to thank Allen Knutsen
and Dan Roozemond for many helpful conversations, Mark Walton for
telling him about the reference \cite{FeingoldFredenhagen2008}, and Dan Grayson and
Mike Stillman for their advice in implementing the fusion rules in
\texttt{Macaulay2}.  Several additional \texttt{Macaulay2} packages were used in
our research, and we would like to thank their authors: Greg Smith, author of the
\texttt{FourierMotzkin} package \cite{FourierMotzkin}; Ren\'{e}
Birkner, author of  the \texttt{Polyhedra} package \cite{Polyhedra}; and Josephine Yu, Nathan
Ilten, and Qingchun Ren, who shared preliminary versions of their
\texttt{PolyhedralObjects} and \texttt{PolymakeInterface} packages.

\section{The B{\'e}gin-Mathieu-Walton formula} \label{BMW section}

\subsection{Notation}  Let $\mathfrak{g} = \sL_3$.  Let $\mathfrak{h} \subset \mathfrak{g}$
be the Cartan subalgebra of diagonal matrices.  Let
$\varepsilon_i: \mathfrak{h} \rightarrow \CC$ be the function
$\varepsilon_{i}(H) = h_{i,i}$.  Let $\alpha_1 =
\varepsilon_i-\varepsilon_2$ and $\alpha_2 =
\varepsilon_2-\varepsilon_3$.    Then $\Delta = \{\alpha_1,\alpha_2\}$
is a base of the root system of $\mathfrak{g}$, and 
$\theta = \alpha_1 + \alpha_2$ is the highest root with respect to $\Delta$.  The Cartan matrix is 
\[
\left[ \begin{array}{rr}
2 & -1 \\
-1 & 2
\end{array}\right].
\]
Let $\omega_1$ and $\omega_2$ be the fundamental dominant weights.
Then we have $\alpha_1 = 2\omega_1 - \omega_2$, $\alpha_2 = -\omega_1
+ 2 \omega_2$, and we may invert this system of equations to obtain $\omega_1 =
\frac{2}{3} \alpha_1 + \frac{1}{3} \alpha_2$ and $\omega_2 =
\frac{1}{3} \alpha_1 + \frac{2}{3} \alpha_2$.  The Killing form on the
fundamental weights is 
\begin{align*}
(\omega_1, \omega_1) &= \frac{2}{3}\\
(\omega_1, \omega_1) &= \frac{1}{3}\\
(\omega_2, \omega_2) &= \frac{2}{3}\\
\end{align*}
We have $(\theta,\theta) = 2$, and $(a \omega_1 + b \omega_2, \theta)
= a+b$.  

Let $\lambda = a \omega_1 + b \omega_2 = (a,b)$, $\mu = c \omega_1 + d
\omega_2=(c,d)$, and $\nu = e \omega_1 + f \omega_2 = (e,f)$.  Let
$C^{+}$ be the fundamental Weyl chamber; then $C^{+} = \{ c_1 \omega_1
+ c_2 \omega_2 : c_1, c_2 \geq 0\}$.  The
fundamental Weyl alcove of level $\ell$ is $P_{\ell} = \{ \beta \in
C^{+} : (\beta,\theta) \leq \ell\}$.  Thus, $\lambda, \mu, \nu \in
P_{\ell}$ if and only if $a,b,c,d,e,f \geq 0$ and $a+b,c+d,e+f \leq \ell$.

\subsection{The B\'{e}gin-Mathieu-Walton formula} In the exposition below, we combine some of the formulas from
\cite{BMW} to obtain a more self-contained presentation.
\begin{theorem}{\cite{BMW}}
%The ranks of the vector bundle of conformal blocks
%$\mathbb{V}(\sL_3,\ell,(\lambda,\mu,\nu)$ on $\M_{0,3}$ is computed by
%the following formula: 
The fusion rules of type $A_2$ are given as follows:
\begin{eqnarray*}
%\lefteqn{\operatorname{rank} \mathbb{V}(\sL_3,\ell,(\lambda,\mu,\nu))} \\
%& = & 
N_{\lambda,\mu}^{(\ell) \nu}  = &  
\left\{
\begin{array}{ll}
\min \{ k_0^{\max}, \ell \} - k_0^{\min} +1 & \mbox{if $\ell \geq
  k_0^{\min}$ and $N_{\lambda,\mu}^{\nu} >0$ }, \\
0 & \mbox{if $\ell < k_0^{\min}$ or $N_{\lambda,\mu}^{\nu} = 0$}, 
\end{array}
\right.
\end{eqnarray*}
where 
\begin{eqnarray*}
A &= &\frac{1}{3} ( 2(a+ c + f) + (b+d+e)), \\
B &= &\frac{1}{3} ( (a + c + f) + 2(b+d+e)), \\
k_0^{\min} & = & \max \{ a+b, c+d, e+f,
A - \min(a,c,f), B -
\min(b,d,e)\}, \\
k_0^{\max} & = & \min \{ A, B \}, \\
\delta & = &  \left\{
\begin{array}{ll}
1 & \mbox{ if $k_0^{\max} \geq k_0^{\min}$ and $A,
  B \in \mathbb{Z}_{\geq 0}$,} \\
0 & \mbox{ otherwise,}
\end{array}
\right.  \\
N_{\lambda,\mu}^{\nu} & = & (k_0^{\max} - k_0^{\min} +1) \delta.\\
\end{eqnarray*}
\end{theorem}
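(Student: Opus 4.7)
The plan is to prove the formula by evaluating the Kac--Walton algorithm symbolically on generic inputs $\lambda = (a,b)$, $\mu = (c,d)$, $\nu = (e,f)$ at level $\ell$, so that the output directly matches the stated closed form.

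First, I would treat the tensor coefficient $N_{\lambda,\mu}^{\nu} = (k_0^{\max} - k_0^{\min} + 1)\delta$ as a preliminary lemma. The Racah--Speiser / Brauer formula expresses $N_{\lambda,\mu}^{\nu}$ as an alternating sum over the Weyl group $W$ of weight multiplicities in $V_\mu$ evaluated at suitable shifts of $\nu$. For type $A_2$ the weight diagram of $V_\mu$ is a union of concentric hexagonal layers on which the multiplicity grows by one from the outside in until a triangular core is reached, so the contributing shifted weights form a one-parameter family indexed by an integer $k_0$ that tracks the layer. The five conditions defining $k_0^{\min}$ are exactly the five half-space inequalities cutting out the intersection of the shifted hexagon with the dominant chamber: three walls of $C^{+}$ give $k_0 \geq a+b,\ c+d,\ e+f$, and the remaining two encode the passage from the hexagonal rings into the triangular core. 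The upper bound $k_0^{\max} = \min\{A,B\}$ is the outer boundary of the shifted hexagon, and $\delta$ records the integrality of $A$ and $B$, equivalent to $\nu - \lambda - \mu$ lying in the root lattice.

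Second, I would apply the affine Kac--Walton truncation. The fusion algorithm replaces $W$ by the affine Weyl group acting by the dot action relative to $\ell + h^{\vee} = \ell + 3$. For $A_2$, the only reflection that can send a tensor summand back into $P_{\ell}$ is the reflection across the affine wall $(\beta+\rho,\theta) = \ell + 3$, and this reflection acts on the $k_0$-family by a single affine involution whose fixed parameter is essentially $\ell+1$. Hence a contribution at parameter $k_0 > \ell$ is paired with a reflected image of opposite sign, and the pair either cancels entirely or truncates the upper endpoint of the counting interval from $k_0^{\max}$ down to $\min\{k_0^{\max},\ell\}$, leaving $k_0^{\min}$ untouched. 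Combined with the tensor lemma, this gives the fusion formula.

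The main obstacle lies in the bookkeeping that produces the clean five-term maximum for $k_0^{\min}$: after the alternating sums over $W$ there are many candidate constraints, and it requires a finite but delicate case analysis in the signs of $a-c$, $c-f$, $b-d$, $d-e$ and similar differences to verify that the surviving constraints are exactly $a+b$, $c+d$, $e+f$, $A - \min(a,c,f)$, and $B - \min(b,d,e)$, and that degenerate configurations on shared walls contribute zero. I would discharge this analysis symbolically in \texttt{Macaulay2}, using the \texttt{Polyhedra} and \texttt{FourierMotzkin} packages to produce and compare the relevant half-space descriptions, and then carry out the affine truncation explicitly on the resulting piecewise-linear count to recover the stated formula.
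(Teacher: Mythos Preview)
Your proposal shares the paper's endpoint---a symbolic Macaulay2 verification---but organizes the computation differently, and the reorganization introduces a genuine gap.

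The paper does not separate the argument into ``first compute tensor coefficients via Racah--Speiser, then truncate by a single affine reflection.'' Instead it evaluates the Kac--Walton sum directly, using the Antoine--Speiser multiplicity formula for $m_\lambda(\varphi)$ as a piecewise linear function on 14 cones, and summing $\sgn(w)\,m_\lambda(w\cdot\nu-\mu)$ over an explicit list of affine Weyl group elements. The crucial lemma (Lemma~\ref{contributing alcoves lemma} in the paper) is that exactly 13 alcoves contribute, corresponding to the words
\[
\{\,Id,\ s_1,\ s_2,\ s_1s_2,\ s_2s_1,\ s_1s_2s_1,\ s_0,\ s_0s_1,\ s_0s_2,\ s_1s_0,\ s_2s_0,\ s_0s_1s_0,\ s_0s_2s_0\,\},
\]
of which \emph{seven} involve $s_0$. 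Your claim that ``the only reflection that can send a tensor summand back into $P_\ell$ is the reflection across the affine wall'' is therefore false as stated: for instance, if $\lambda=\mu=(\ell,0)$ then the tensor summand $\nu'=(2\ell,0)$ satisfies $s_0\cdot\nu'=(\ell-1,-\ell-1)$, which is not dominant, so longer words such as $s_2s_0$ are genuinely needed. Without these extra contributions your truncation step would be set up incorrectly, and the Macaulay2 bookkeeping you defer to would not produce the 27 cones that match the B\'egin--Mathieu--Walton formula.

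The heuristic that the affine step ``just replaces $k_0^{\max}$ by $\min\{k_0^{\max},\ell\}$'' is of course consistent with the final answer, but that is precisely what is to be proved; it does not follow from a single reflection on a one-parameter family. If you want to keep your two-step structure, you would need to identify all seven affine contributions and show that their alternating sum collapses to this simple truncation---which is essentially the same 13-term computation the paper carries out, just regrouped. The paper's direct approach avoids having to justify the collapse a priori.
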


%
%In \cite{BMW}, the authors derive this formula using another formula
%called the depth rule, which was conjectural at the time the paper was
%written.  The depth rule has since been proven, and so the proof in
%\cite{BMW} can now be considered a complete proof of this formula for
%the fusion rules of $\mathfrak{sl}_3$.

We note three nice features of this formula.  
First, note that $\ell$ enters only in the very last step of the
calculation; it does not appear in the definition of $A$,
$B$, $k_{0}^{\min}$, $k_{0}^{\max}$, $\delta$, or
$N_{\lambda,\mu}^{\nu}$.  Second, it is clear that
$N_{\lambda,\mu}^{(\ell) \nu}$ stabilizes for all sufficiently large
$\ell$, specifically once $\ell \geq k_{0}^{\max}$ and $\ell \geq k_{0}^{\min}$.  Finally, this
formula allows us to interpret $N_{\lambda,\mu}^{(\ell) \nu}$ as the
number of lattice points in a polytope; specifically, it is the number
of integers $x$ satisfying $k_{0}^{\min} \leq x \leq \min\{ k_{0}^{\max},\ell\}$.

\subsection{An equivalent version of the B\'{e}gin-Mathieu-Walton formula}
We modify the formula from B{\'e}gin, Mathieu, and Walton's paper
slightly.  We use fewer instances of $\max$ and $\min$, and the cases are
rewritten slightly to match the output we obtain from the Kac-Walton algorithm.  

Define
\begin{eqnarray*}
G(\lambda,\mu,\nu,\ell) := 
\left\{
\begin{array}{ll}
\ell_0^{\max} - k_0^{\min} +1 & \mbox{if $\ell_0^{\max} - k_0^{\min}
  \geq -1$},\\
0 & \mbox{otherwise}. 
\end{array}
\right.
\end{eqnarray*}
where 
\begin{eqnarray*}
A &= &\frac{1}{3} ( 2(a + c + f) + (b+d+e)), \\
B &= &\frac{1}{3} ( (a + c + f) + 2(b+d+e)), \\
k_0^{\min} & := & \max \{ a+b, c+d, e+f,
A -a, A -c, A -f,
B -b, B -d, B -e\}, \\
\ell_0^{\max} & = & \min \{ A, B, \ell \}.
\end{eqnarray*}

\begin{proposition}[B\'{e}gin-Mathieu-Walton] \label{main proposition}
If $A$ and $B$ are integers, then $G(\lambda,\mu,\nu,\ell)  = N_{\lambda,\mu}^{(\ell) \, \nu} $.
\end{proposition}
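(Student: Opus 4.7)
The plan is to verify algebraically that the modified formula $G$ is nothing but a rewriting of the B\'egin-Mathieu-Walton expression under the hypothesis that $A$ and $B$ are integers; no new representation-theoretic input is needed. First I would align the intermediate quantities. Since $A$ is fixed in the maximization, $A - \min\{a,c,f\} = \max\{A-a, A-c, A-f\}$, and similarly $B - \min\{b,d,e\} = \max\{B-b, B-d, B-e\}$, so the nine-term maximum defining $k_0^{\min}$ in the modified formula collapses to the five-term maximum in the original. Next, $\min\{k_0^{\max}, \ell\} = \min\{A, B, \ell\} = \ell_0^{\max}$. Under the standing hypothesis $A, B \in \mathbb{Z}$, together with $\ell \in \mathbb{Z}$ and the obvious integrality of the nine entries of $k_0^{\min}$, all differences appearing below are integers.

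The rest is a branch-by-branch comparison on the value of $\delta$. If $\delta = 1$, that is, $A, B \in \mathbb{Z}_{\geq 0}$ and $k_0^{\max} \geq k_0^{\min}$, then $N_{\lambda,\mu}^{\nu} > 0$, and I split further on the size of $\ell$: when $\ell \geq k_0^{\min}$, BMW returns $\ell_0^{\max} - k_0^{\min} + 1 \geq 1$, while $\ell_0^{\max} - k_0^{\min} \geq 0$ puts us in the first branch of $G$ and yields the same value; when $\ell < k_0^{\min}$, BMW returns $0$, and $\ell_0^{\max} \leq \ell \leq k_0^{\min} - 1$ forces $\ell_0^{\max} - k_0^{\min} \leq -1$, so $G = 0$ whether the difference equals $-1$ or is smaller.

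If $\delta = 0$, then $N_{\lambda,\mu}^{(\ell)\nu} = 0$ and I must show $G = 0$. Either $A, B \in \mathbb{Z}_{\geq 0}$ with $k_0^{\max} < k_0^{\min}$, in which case $\ell_0^{\max} \leq k_0^{\max} \leq k_0^{\min} - 1$; or else one of $A, B$ is a negative integer, say $A \leq -1$, and then $\ell_0^{\max} \leq A \leq -1$ while $k_0^{\min} \geq a+b \geq 0$. In both subcases $\ell_0^{\max} - k_0^{\min} \leq -1$, so $G = 0$.

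The single step that merits attention is that the formula for $G$ is precisely calibrated so that the value at the boundary $\ell_0^{\max} - k_0^{\min} = -1$ is $0$; this is what makes the equivalence with the original BMW formula hold exactly rather than off by one. Beyond this bookkeeping I expect no real obstacle, since the proposition is essentially a repackaging of the BMW statement into the shape that will emerge naturally from the symbolic evaluation of the Kac-Walton algorithm in the next section.
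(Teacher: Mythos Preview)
Your algebraic verification is correct: you show that $G$ coincides with the original B\'egin--Mathieu--Walton expression of Theorem~2.1 (the identities $A-\min\{a,c,f\}=\max\{A-a,A-c,A-f\}$ and $\min\{k_0^{\max},\ell\}=\ell_0^{\max}$ line up the two sets of intermediate quantities, and your case split on $\delta$ handles the branches cleanly). One harmless redundancy: the subcase ``$A\le -1$'' is vacuous, since $a,b,c,d,e,f\ge 0$ forces $A,B\ge 0$; under the hypothesis $A,B\in\mathbb{Z}$ the only way $\delta=0$ can occur is $k_0^{\max}<k_0^{\min}$.

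However, this is a genuinely different route from the paper's. You reduce Proposition~\ref{main proposition} to Theorem~2.1, which is cited from \cite{BMW} and whose proof there rests on the depth rule. The paper instead proves the proposition \emph{independently} of \cite{BMW}: Section~4 evaluates the Kac--Walton formula $\sum_w \sgn(w)\,m_\lambda(w\cdot\nu-\mu)$ symbolically, using the Antoine--Speiser multiplicity formula and the list of 13 contributing alcoves, and checks by computer that the resulting piecewise-linear function on 27 cones agrees with $G$. Your argument is far more elementary and confirms that the two presentations of the formula are equivalent, but it does not constitute the ``new proof'' that is the point of the paper; the paper's approach buys a derivation that avoids the depth rule entirely.
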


We view the formula $G(\lambda,\mu,\nu,\ell)$ above as a continuous piecewise linear
function supported on 27 polyhedral cones.  
As an example of one such cone, to get the expression 
$N_{\lambda,\mu}^{(\ell) \nu} = A-a-b+1$ above requires that 
\begin{gather*}
a+b = \max \{ a+b, c+d, e+f,
A -a, A -c, A -f,
B -b, B -d, B -e\}, \\
\ell = \min \{ A, B, \ell \}.
\end{gather*}
This leads to the 
the inequalities 
\begin{align*}
a+b & \geq c+d \\
a+b & \geq e+f \\
a+b & \geq A-a\\
a+b & \geq A-b\\
a+b & \geq A-f\\
a+b & \geq B-b\\
a+b & \geq B-d\\
a+b & \geq B-e\\
A & \leq B \\
A & \leq \ell.
\end{align*}
We also have $a,b,c,d,e,f,l \geq 0$ and $a+b, c+d, e+f \leq \ell$.
These 20 inequalities determine a polyhedral cone in $\RR^7$.

In a similar fashion, we may associate a finitely-generated polyhedral
cone to each of the remaining 26 nonzero
expressions that may arise from $G(\lambda,\mu,\nu,\ell).$

\section{The Racah-Speiser and Kac-Walton algorithms} \label{Kac Walton
  section}

Two references for the Kac-Walton algorithm are \cite{Kac}*{Exercise
  13.35} and \cite{Walton}.  Its history is described in
\cite{Walton}.  The Kac-Walton algorithm is closely related to the Racah-Speiser
algorithm for tensor coefficients, and so we recall the Racah-Speiser
algorithm first.  

\subsection{The Racah-Speiser algorithm} 
Let $\mathfrak{g}$ be a Lie algebra.  Let $\mathfrak{h} \subseteq
\mathfrak{g}$ be a Cartan subalgebra, let $\Phi$ be the root system
determined by $\mathfrak{h}$, and let $\Delta = \{ \alpha_1,
\ldots, \alpha_n\}$ be a base of $\Phi$.  Let $\lambda$ be a dominant
integral weight, and let $V(\lambda)$ be an irreducible
finite-dimensional $\mathfrak{g}$-module with highest
weight $\lambda$.  

Let $m_{\lambda}(\mu)$ denote the dimension of the weight space
$V_{\mu}$ in the irreducible representation $V(\lambda)$.  
We shall refer to the set of pairs
$(\mu,m_{\lambda}(\mu))$ as the \textit{weight diagram} of $\lambda$;
clearly, the character of $V(\lambda)$ can be computed from the weight
diagram, and vice versa.   Let $\Phi^{+}$ be the set of positive roots, and let
$\rho = \frac{1}{2} \sum_{\alpha \in \Phi^{+}} \alpha$.  Let $C^{+}$ be the fundamental Weyl chamber.  

Let $W$ be the Weyl group of $\mathfrak{g}$, and for $w \in W$, let 
$w \cdot \varphi$ be the shifted reflection defined by $w \cdot \varphi= w(\varphi+\rho)-\rho$.

Define
the tensor product coefficients $N_{\lambda \mu}^{\nu} $ by 
\[
V(\lambda) \otimes  V(\mu) = \bigoplus_{\nu \in C^{+}}  V(\nu)^{\oplus
  N_{\lambda \mu}^{\nu}}.  
\]

The Racah-Speiser algorithm is described below: 

\begin{algorithm}[H] \label{Racah-Speiser algorithm}
\caption{Racah-Speiser algorithm}
\begin{algorithmic}
\REQUIRE dominant integral weights $\lambda$ and $\mu$
\ENSURE the set of tensor product coefficients $\{ N_{\lambda
  \mu}^{\nu} \}$
\STATE Begin with $N_{\lambda \mu}^{\nu} = 0$.
\STATE Compute $WD(\lambda)$, the weight diagram of $\lambda$.  
\STATE Translate each weight in $WD(\lambda)$ by $\mu$.
\STATE For each weight $\varphi$ in $WD(\lambda) +\mu$, if $\varphi$
is not fixed by any shifted reflection $w \cdot \varphi$ for $w \in
W$, compute an element $w$ such that $w \cdot \varphi \in C^{+}$ and add $m_{\lambda}(\varphi-\mu) \sgn(w)$ to $N_{\lambda \mu}^{w \cdot \varphi}$.
\RETURN $\{ N_{\lambda \mu}^{\nu} \}$
\end{algorithmic}
\end{algorithm}

For a proof of the correctness of this algorithm, we refer to \cite{GoodmanWallach}*{Corollary 7.1.7}; see
also \cite{Humphreys}*{Exercise 24.9}, where this formula is
attributed to Brauer-Klimyk, and \cite{FultonHarris}*{Exercise 25.31},
where this formula is attributed to Racah.  After converting Goodman
and Wallach's notation to ours, the main formula of Corollary 7.1.7 is 
\[
N_{\lambda \mu}^{\nu} = \sum_{w \in W} \operatorname{sgn}(w) m_{\lambda}(\nu + \rho - w(\mu+\rho)).
\]
Since the weight diagram is symmetric under the Weyl group, we have 
\begin{align*}
m_{\lambda}(\nu+\rho -w
(\mu+\rho)) & = m_{\lambda} (w(\nu+\rho) -
(\mu+\rho)) \\
&= m_{\lambda} (w \cdot \nu - \mu).
\end{align*}
Substituting this into the previous formula gives 
\begin{equation} \label{GoodmanWallach}
N_{\lambda \mu}^{\nu} = \sum_{w \in W} \sgn(w) m_{\lambda} (w \cdot \nu - \mu),
\end{equation}
and this formula agrees with the calculations described in the Racah-Speiser algorithm.

\textit{Example.}  As an example, let $\mathfrak{g}= \mathfrak{sl}_3$, and let $\lambda =
(4,2) = 4 \omega_1  + 2 \omega_2$ and $\mu = (3,1) = 3 \omega_1 + 1
\omega_2$.  We use the Racah-Speiser algorithm to compute the
decomposition of the tensor
product $V(\lambda) \otimes V(\mu)$ into irreducible $\mathfrak{sl}_3$
modules.  
\begin{figure}[H]
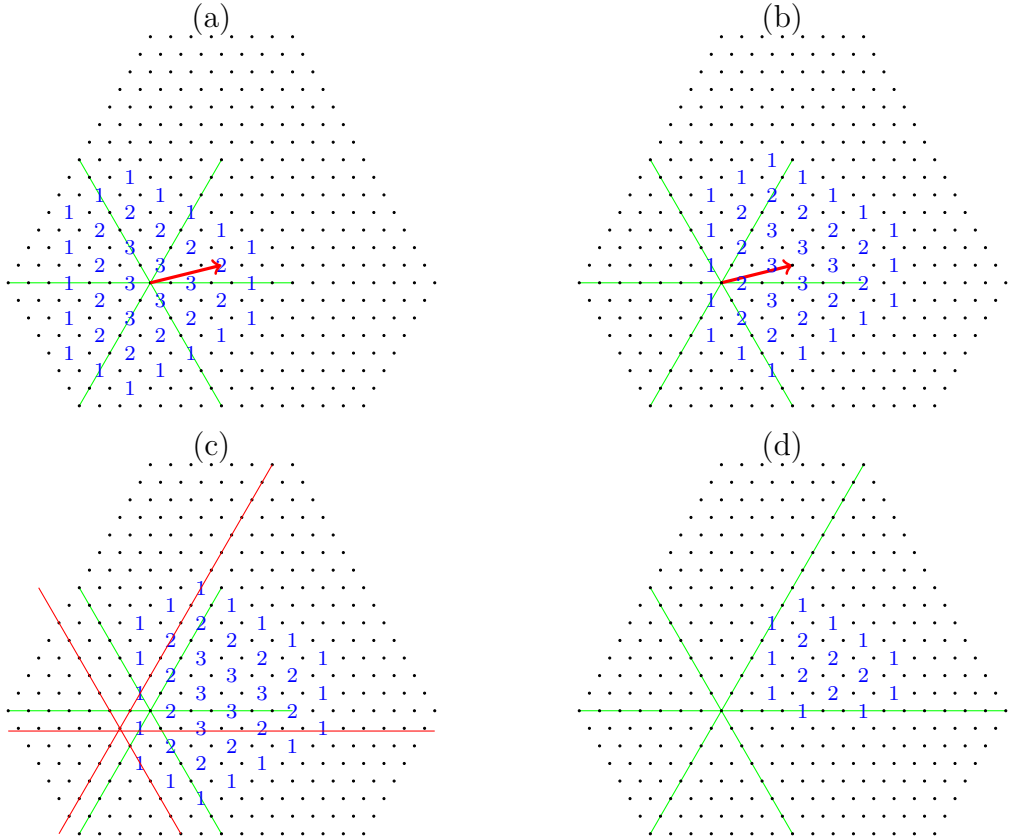
 
\caption{The Racah-Speiser algorithm.  (a) The weight diagram of
  $\lambda$.  (b) The weight diagram of $\lambda$ translated by
  $\mu$.  (c) Reflect into the fundamental chamber.  (d) The tensor
  coefficients.}
\label{Racah Speiser example}
\begin{center}
\include{RacahSpeiser-A2-4-2-3-1}
\end{center}
\end{figure}
From this we see that $N_{\lambda \mu}^{\nu} = 1$ if $\nu \in \{(0, 5), (1, 3), (1, 6), (2, 1), (3, 5), (4, 0), (5, 4), (7, 0), (7, 3), (8, 1) \}$,
$N_{\lambda \mu}^{\nu} = 2$ if $\nu \in \{ (2, 4), (3, 2), (4, 3), (5,
1), (6, 2)\}$, and otherwise $N_{\lambda \mu}^{\nu} = 0$.

\subsection{The Kac-Walton algorithm}  
Fix an integer $\ell \geq 0$.  The Kac-Walton algorithm differs from the Racah-Speiser algorithm by
replacing the Weyl group $W$ with the affine Weyl group
$\widehat{W}$ and the fundamental Weyl chamber $C^{+}$ by the
fundamental Weyl alcove $P_{\ell}$ defined below.  In contrast with the Weyl group,
the affine Weyl group is infinite.  However, it can be obtained by adding just one extra
generator to the Weyl group.  For $i=1,\ldots,n$, let $s_{i}$ be  the reflection across the hyperplanes perpendicular to the simple
root $\alpha_i$.  Let $s_{0}$ be the affine linear transformation
\[
s_{0}(\beta) = \beta + (\ell-(\beta,\theta)+1)\theta,
\]
where $\theta$ is the highest root, and $(-,-)$ is the Killing form
normalized so that $(\theta,\theta)=2$.  Then $W = \langle s_1,
\ldots, s_n \rangle$, and $\widehat{W} = \langle s_0,
\ldots, s_n \rangle$.

 The
fundamental Weyl alcove of level $\ell$ is $P_{\ell} = \{ \beta \in
C^{+} : (\beta,\theta) \leq \ell\}$.

The Kac-Walton algorithm is described below: 

\begin{algorithm}[H] 
\caption{Kac-Walton algorithm}
\label{Kac-Walton algorithm}
\begin{algorithmic}
\REQUIRE dominant integral weights $\lambda, \mu \in P_{\ell}$ 
\ENSURE the set of fusion coefficients $\{ N_{\lambda
  \mu}^{(\ell) \nu} \}$
\STATE Begin with $N_{\lambda \mu}^{(\ell) \nu} = 0$.
\STATE Compute $WD(\lambda)$, the weight diagram of $\lambda$.  
\STATE Translate each weight in $WD(\lambda)$ by $\mu$.
\STATE For each weight $\varphi$ in $WD(\lambda) +\mu$, if $\varphi$
is not fixed by any shifted reflection $w \cdot \varphi$ for $w \in
\widehat{W}$, compute an element $w$ such that $w \cdot \varphi \in
P_{\ell}$ and add $m_{\lambda}(\varphi-\mu) \sgn(w)$ to $N_{\lambda
  \mu}^{(\ell) w \cdot \varphi}$.
\RETURN $\{ N_{\lambda \mu}^{(\ell) \nu} \}$
\end{algorithmic}
\end{algorithm}

For the purposes of this paper, we shall use the Kac-Walton algorithm
to define the fusion coefficients $N_{\lambda \mu}^{(\ell) \nu}$.  For
a proof that the Kac-Walton algorithm computes the multiplicities of
irreducible level $\ell$ integrable $\widehat{g}$-modules in the
fusion product, see \cite{Kac}*{Exercise
  13.35} and \cite{Walton}.

\textit{Example.}  As an example, let $\mathfrak{g}= \mathfrak{sl}_3$, and let $\lambda =
(4,2) = 4 \omega_1  + 2 \omega_2$ and $\mu = (3,1) = 3 \omega_1 + 1
\omega_2$, and let $\ell=7$.  We use the Kac-Walton algorithm to compute the
decomposition of the fusion
product $V(\lambda) \otimes_{7} V(\mu)$ into irreducible $\mathfrak{sl}_3$
modules.  The first two steps are the same as those of the
Racah-Speiser algorithm; see Figure \ref{Racah Speiser example} (a) and (b).  
\begin{figure}[H]
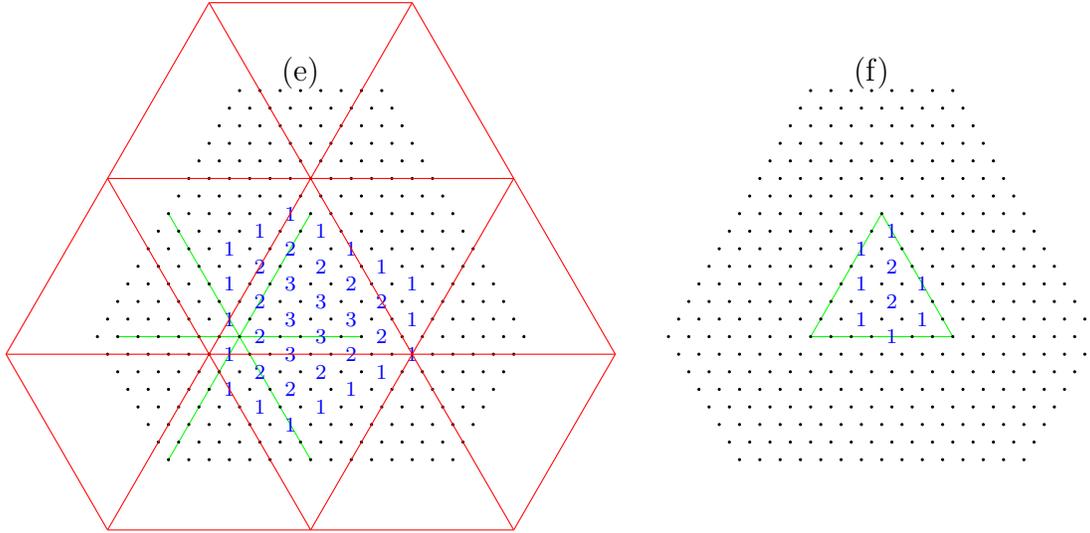

\caption{The Kac-Walton algorithm.  Steps (a) and (b) are the same as
  in Figure \ref{Racah Speiser example}.   (e) Reflect into the fundamental alcove.  (f) Fusion coefficients.}
\begin{center}
\include{KacWalton-A2-4-2-3-1-level-7}
\end{center}
\end{figure}
From this we see that $N_{\lambda \mu}^{(\ell) \nu} = 1$ if $\nu \in
\{(0, 5), (1, 3), (1, 6), (2, 1), (4, 0), (4, 3),  (5, 1) \}$,
$N_{\lambda \mu}^{(\ell) \nu} = 2$ if $\nu \in \{ (2, 4), (3, 2)\}$, and otherwise $N_{\lambda \mu}^{(\ell) \nu} = 0$.

\section{Our proof} \label{our proof section} 
\subsection{A multiplicity formula}
Weight diagrams for Type $A_2$ have a very pretty description; in 
\cite{Humphreys}*{\S 21.3}, Humphreys attributes this description
to Antoine and Speiser.  The boundary of the weight diagram is a
(nonregular) hexagon with all multiplicities equal to one.  As one passes
from one hexagonal ``shell'' of the weight diagram to the next ``shell'' inside it, the
multiplicity increases by one, until the shells become
triangles, at which point the multiplicity is constant.  See Figure
\ref{Racah Speiser example}(a) for an example.  
%\begin{figure} \caption{Weight diagram for $\lambda = 4\omega_1 + 2\omega_2$}
%\begin{center}
%\include{WeightDiagram-A2-4-2}
%\end{center}
%\end{figure}

Writing formulas for the pattern described above yields the following:
\begin{lemma}[Antoine-Speiser] \label{Antoine-Speiser Lemma}
Let $\lambda=(a,b)$ and $\varphi = (x,y)$ be two weights in the
fundamental chamber $C^{+}$, and suppose $a+2b-x-2y$ is divisible by 3 (so that
$\lambda-\varphi$ is in the root lattice).  Then the multiplicity of $\varphi$ in $V(\lambda)$ is 
\begin{displaymath}
m_{\lambda}(\varphi) = \max \left\{ 0, \min\left \{ \frac{1}{3}(a+b-x-2y)+1,
  \frac{1}{3}(2a+b-2x-y)+1,a+1,b+1 \right\} \right\}.
\end{displaymath}
\end{lemma}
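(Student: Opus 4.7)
The plan is to derive the formula algebraically from the Antoine--Speiser hexagonal shell description recalled in the preceding text. Recall that for $k = 0, 1, \ldots, \min(a,b)$, the $k$-th shell (counting inward from the boundary) is the Weyl orbit of $\lambda - k\rho = (a-k, b-k)$ and carries constant multiplicity $k+1$; for $k > \min(a,b)$ the shells are triangles on which the multiplicity stabilizes at $\min(a,b)+1$. My strategy is to identify the shell containing a given dominant weight $\varphi = (x,y)$ by parametrizing the two edges of each shell and solving for $k$ in terms of $(x,y)$.

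The $k$-th hexagonal shell meets the dominant chamber in two lattice segments emanating from the corner $(a-k, b-k)$. Edge 1, in direction $-\alpha_1 = (-2,1)$, is swept by $\varphi = (a-k-2s,\, b-k+s)$ for integer $s \geq 0$; edge 2, in direction $-\alpha_2 = (1,-2)$, is swept by $\varphi = (a-k+t,\, b-k-2t)$. Eliminating the parameters yields two candidate expressions for the shell index, which I denote
\begin{align*}
k_1(\varphi) &= \tfrac{1}{3}(a + 2b - x - 2y), \\
k_2(\varphi) &= \tfrac{1}{3}(2a + b - 2x - y).
\end{align*}
A direct substitution shows that a point on edge 1 at parameter $s$ has $k_1 = k$ and $k_2 = k+s \geq k$, and symmetrically on edge 2. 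Thus $\min(k_1, k_2)$ recovers the true shell index $k$, so $m_\lambda(\varphi) = \min(k_1+1, k_2+1)$ whenever $\varphi$ lies on a hexagonal shell.

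It remains to handle the inner triangular region and the complement of the weight diagram. In the triangular region both $k_1$ and $k_2$ exceed $\min(a,b)$, so the added arguments $a+1$ and $b+1$ cap the minimum at $\min(a,b)+1$, which is precisely the triangular multiplicity. The outer hexagon is cut from the dominant chamber by the inequalities $x+2y \leq a+2b$ and $2x+y \leq 2a+b$; if $\varphi$ fails either inequality, then $k_1 < 0$ or $k_2 < 0$, whence $\min(k_1+1, k_2+1) \leq 0$ and the outer $\max$ with $0$ correctly returns $0$.

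The main obstacle is not this algebraic translation but the shell structure itself. A self-contained proof could proceed by induction on $k$ using Freudenthal's recursive multiplicity formula, or by expanding Kostant's multiplicity formula for $A_2$ into six terms using $P(m\alpha_1 + n\alpha_2) = \min(m,n)+1$ for $m, n \geq 0$ and simplifying the alternating sum case by case. However, since the preceding text attributes the shell description to Humphreys, we may cite it rather than reprove it, and the verification above reduces to routine parameter elimination.
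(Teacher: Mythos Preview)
Your argument is correct: the parametrization of the two dominant edges of the $k$-th hexagonal shell cleanly yields $\min(k_1,k_2)=k$, and your verification that both $k_1,k_2>\min(a,b)$ throughout the inner triangular region (so that the caps $a+1$, $b+1$ take over) and that $\min(k_1,k_2)<0$ outside the support (so the outer $\max$ with $0$ applies) is sound.

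Your route differs from the paper's. You take the qualitative Antoine--Speiser shell description as an input (citing Humphreys) and reduce the lemma to an algebraic translation of ``which shell am I on.'' The paper instead proves the formula from scratch: it notes the SSYT approach and then gives its own argument by double induction via Freudenthal's formula (first on the distance to the boundary along a positive root, then on the distance from a generic weight at depth $k$ to $\lambda-k\theta$), with the details posted externally. Your approach is shorter and more transparent once the shell pattern is granted, but it defers the real work to that pattern---which, as you acknowledge, is itself most naturally proved by exactly the Freudenthal induction the paper carries out. So the two proofs are complementary: yours makes explicit why the $\min/\max$ formula encodes the shell index, while the paper's supplies the representation-theoretic justification for the shell structure that you invoke.
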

\begin{proof}
The most popular way to derive this formula is %to show that the character of the
%representation associated to $\lambda$ is the Schur polynomial
%$s_{\lambda}$ (see for instance \cite{FultonHarris}*{Theorem 6.3.3}), and then to use the %to establish a correspondence
%between irreducible $\mathfrak{g}$-modules and irreducible
%$W$-modules \cite{}.  For type $A_n$, the Weyl group is $S_n$, and we may
%apply the character theory of symmetric groups.  Then 
use the fact that the multiplicity of $\mu$ in $V(\lambda)$ is the
number of semistandard Young tableaux of shape $\lambda$ and weight
$\mu$ (see e.g. \cite{GoodmanWallach}*{Cor. 8.1.7}).  This leads to the inequalities printed above.  

However, following Exercise 25.15 in \cite{FultonHarris}, we wrote our
own proof using double induction and Freudenthal's formula (see
e.g.~\cite{FultonHarris}*{Lecture 25}).  The first induction is
on the distance to the boundary along a positive root, and the second
induction is on the distance from an arbitrary weight $\mu$ with distance $k$ to the boundary to the point $\lambda - k \theta$.  For the 
full proof, see our website:
\begin{center}
\neturl{http://faculty.fordham.edu/dswinarski/symbolickacwalton/}
\end{center}
\end{proof}

We view the formula in Lemma \ref{Antoine-Speiser Lemma} as a continuous piecewise linear function supported
on seven cones.  As an example of one such cone, to get the multiplicity expression
$\frac{1}{3}(a+b-x-2y)+1$ above requires the inequalities 
\begin{gather*}
\frac{1}{3}(a+b-x-2y)+1 \geq 0 \\
\frac{1}{3}(a+b-x-2y)+1 \leq \frac{1}{3}(2a+b-2x-y)+1\\
\frac{1}{3}(a+b-x-2y)+1 \leq a+1 \\
\frac{1}{3}(a+b-x-2y)+1 \leq b+1. 
\end{gather*}
These inequalities, together with the inequalities $0 \leq a,b,x,y$,
determine a finitely-generated polyhedral cone in $\RR^4$.
In a similar fashion, we associate three more cones to the other three nonzero
expressions in Lemma \ref{Antoine-Speiser Lemma}.

We define three different cones where the multiplicity expression is
0.  Observe first that since $\lambda \in P_{\ell}$, we have $a \geq 0$ and $b \geq 0$, so the expressions
$a+1$ and $b+1$ in Lemma \ref{Antoine-Speiser Lemma} never cause the
multiplicity to vanish.  Thus, we define one cone where $\frac{1}{3}(a+b-x-2y)+1 \geq 0$ and $\frac{1}{3}(2a+b-2x-y)+1
\leq 0$; in the second cone, we have $\frac{1}{3}(a+b-x-2y)+1 \leq 0$ and $\frac{1}{3}(2a+b-2x-y)+1
\geq 0$; and in the third cone, we have $\frac{1}{3}(a+b-x-2y)+1 \leq 0$ and $\frac{1}{3}(2a+b-2x-y)+1
\leq 0$.  Thus we obtain seven cones total covering the fundamental Weyl chamber.  

Since a weight diagram is symmetric under the Weyl group, we may use
the Weyl group to obtain expressions for the multiplicity in the remaining chambers.  This yields a formula with 42 cones.  However, the resulting 42
expressions are not distinct; some of these
cones may be combined, yielding the following formula, which has 14 cones.
\begin{proposition} \label{multiplicity cone expression}
If $\lambda-(x,y)$ is in the root lattice, then 
the multiplicity of $(x,y)$ in $V(\lambda)$ is given by the
continuous piecewise polynomial formula printed in Figure \ref{14 cone
  multiplicity formula}.
\begin{figure}[H] \label{14 cone multiplicity
    formula} \caption{Multiplicity expressions on 14 cones.  An expression of the form 
$F(x,y,a,b)$ in the left column represents the inequality $F(x,y,a,b) \geq 0$.}
\begin{center} \small
\begin{tabular}{|p{3.75in}|p{3in}|} \hline
Cone inequalities & Multiplicity \\ \hline
$x-y-a+b$, $x+2y-a+b$, $2x+y-2a-b-3$, & $0$\\ \hline
$-x+y+a-b$, $2x+y+a-b$, $x+2y-a-2b-3$& $0$\\ \hline
$-2x-y-a+b$, $x+2y-a+b$, $-x+y-2a-b-3$& $0$\\ \hline
$-x+y+a-b$, $-x-2y+a-b$, $-2x-y-a-2b-3$& $0$\\ \hline
$x-y-a+b$, $-2x-y-a+b$, $-x-2y-2a-b-3$& $0$\\ \hline
$2x+y+a-b$, $-x-2y+a-b$, $x-y-a-2b-3$& $0$\\ \hline
$x-y-a+b$, $x+2y-a+b$, $-2x-y+2a+b+3$, $2x+y+a-b$& $-(2/3)x-(1/3)y+(2/3)a+(1/3)b+1$\\ \hline
$-x+y+a-b$, $2x+y+a-b$, $x+2y-a+b$, $-x-2y+a+2b+3$& $-(1/3)x-(2/3)y+(1/3)a+(2/3)b+1$\\ \hline
$-2x-y-a+b$, $x+2y-a+b$, $x-y+2a+b+3$, $-x+y+a-b$& $(1/3)x-(1/3)y+(2/3)a+(1/3)b+1$\\ \hline
$-x+y+a-b$, $-2x-y-a+b$, $-x-2y+a-b$, $2x+y+a+2b+3$,& $(2/3)x+(1/3)y+(1/3)a+(2/3)b+1$\\ \hline
$x-y-a+b$, $-2x-y-a+b$, $x+2y+2a+b+3$, $-x-2y+a-b$& $(1/3)x+(2/3)y+(2/3)a+(1/3)b+1$\\ \hline
$x-y-a+b$, $2x+y+a-b$, $-x-2y+a-b$, $-x+y+a+2b+3$& $-(1/3)x+(1/3)y+(1/3)a+(2/3)b+1$\\ \hline
$-x+y+a-b$, $2x+y+a-b$, $-x-2y+a-b$, $a-b$ & $b+1$ \\ \hline
$-a+b+x-y$, $-a+b+x+2y$, $-a+b-2x-y$, $-a+b$ &  $a+1$ \\ \hline
\end{tabular}
\end{center} \normalsize
\end{figure}
\end{proposition}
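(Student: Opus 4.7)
The plan is to begin with Lemma~\ref{Antoine-Speiser Lemma}, extend the formula to the other five Weyl chambers by symmetry, and then combine pieces wherever the resulting expressions agree.

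First, I would recast Lemma~\ref{Antoine-Speiser Lemma} itself in cone form on $C^+$: four ``polynomial'' cones, one for each of the expressions $\tfrac{1}{3}(a+b-x-2y)+1$, $\tfrac{1}{3}(2a+b-2x-y)+1$, $a+1$, $b+1$, consisting of the region on which that expression realizes the minimum and is nonnegative; together with three ``zero'' cones covering the region where the multiplicity vanishes (only the first two expressions can be nonpositive since $a,b \geq 0$, giving three sign patterns). This is the same $\min/\max$-to-cone conversion already illustrated earlier in the paper for the B\'egin--Mathieu--Walton formula.

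Next, since the weight diagram is $W$-invariant we have $m_\lambda(x,y) = m_\lambda(w^{-1}(x,y))$ for every $w$ in the Weyl group $W \cong S_3$. Substituting $w^{-1}(x,y)$ into the seven $C^+$-cones above and letting $w$ range over $W$ yields a subdivision of $\RR^4$ (in coordinates $(x,y,a,b)$ with $a,b \geq 0$) into $6 \times 7 = 42$ polyhedral cones, each carrying an explicit polynomial multiplicity expression.

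Finally I would combine cones. Two cones can be merged whenever their union is again polyhedral and the multiplicity polynomials on the two pieces agree on the common supporting hyperplane, hence as polynomials on $\RR^4$. The geometry of the $A_2$ weight diagram predicts the outcome tabulated in Figure~\ref{14 cone multiplicity formula}: the six zero cones are the half-spaces lying outside the six edges of the outer hexagonal boundary of $\mathrm{WD}(\lambda)$; the six linear cones are the six wedge-shells adjacent to those edges; and the two constant cones together form the innermost triangular region, split by whether $a \geq b$ or $a \leq b$. The main obstacle is the combinatorial bookkeeping of the $42 \to 14$ reduction: for each merge one must verify that the two pieces share a supporting hyperplane, that their union is convex, and that the substituted polynomials coincide. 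This is a finite task well suited to a polyhedral package such as \texttt{Polyhedra} or \texttt{FourierMotzkin} in \texttt{Macaulay2}, which the authors use throughout.
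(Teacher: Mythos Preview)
Your proposal is correct and follows essentially the same approach as the paper: start from the seven-cone description of Lemma~\ref{Antoine-Speiser Lemma} on $C^{+}$, transport it to the other chambers via the Weyl group to obtain $42$ cones, and then merge cones carrying identical expressions down to the $14$ listed. The paper carries out exactly this $7 \to 42 \to 14$ reduction, and your additional geometric description of the fourteen pieces (six exterior zero-regions, six edge-adjacent wedges, two inner triangles) is a helpful gloss that the paper does not spell out.
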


\subsection{Contributing alcoves}
Recall that for $w \in \widehat{W}$, $w \cdot \beta =
w(\beta+\rho)-\rho$.  

\begin{lemma} \label{contributing alcoves lemma}
The alcove $w \cdot P_{\ell}$ contributes zero to the Kac-Walton
algorithm unless $w$ is equivalent in the Weyl group to one of the following 13 elements: 
\[
\{ s_0s_2s_0, s_0s_1s_0, s_1s_2s_1, s_0s_2, s_0s_1, s_2s_0, s_1s_0, s_2s_1, s_1s_2, s_0, s_2, s_1, Id
\}.
\]
\end{lemma}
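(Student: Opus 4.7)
The approach is to bound the set of weights $\varphi = \mu + \psi$ that can appear in the algorithm, and then enumerate the alcoves $w \cdot P_{\ell}$ that meet this bounded set. Since every weight $\psi$ of $V(\lambda)$ lies in the convex hull of the Weyl orbit $W\lambda$, whose vertices in fundamental-weight coordinates are
\[
(a,b),\ (-a,a+b),\ (a+b,-b),\ (b,-a-b),\ (-a-b,a),\ (-b,-a),
\]
the components of $\psi$ satisfy $|\psi_1|,\ |\psi_2|,\ |\psi_1+\psi_2|\leq a+b$.  Combined with $a+b\leq \ell$, $0\leq c,d$, and $c+d\leq \ell$ (from $\lambda,\mu\in P_{\ell}$), every admissible $\varphi$ lies in the explicit hexagonal region
\[
H = \{(x,y)\in\RR^2 : -\ell\leq x,\ -\ell\leq y,\ -\ell\leq x+y,\ x\leq 2\ell,\ y\leq 2\ell,\ x+y\leq 2\ell\}.
\]

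Given this bound, I would compute the triangle $w\cdot P_{\ell}$ explicitly for each of the 13 listed elements by applying the shifted action $w\cdot\beta = w(\beta+\rho)-\rho$ generator-by-generator to the three vertices $(0,0)$, $(\ell,0)$, $(0,\ell)$ of $P_{\ell}$.  A direct computation shows that each of the 13 alcoves meets $H$ (and in fact can capture a weight of the translated diagram for suitable $\lambda,\mu\in P_{\ell}$), while the remaining length-$\leq 3$ reduced words in $s_0,s_1,s_2$ — the six ``cyclic'' motions $s_0s_1s_2$, $s_0s_2s_1$, $s_1s_0s_2$, $s_1s_2s_0$, $s_2s_0s_1$, $s_2s_1s_0$ — give alcoves disjoint from $H$.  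This disposes of everything in $\widehat{W}$ of Coxeter length at most 3 that is not on the list.

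For $\ell(w)\geq 4$, the plan is to argue that any additional wall-crossing strictly increases a ``distance to $P_{\ell}$'' functional once the three palindromic length-three motions $s_0s_1s_0, s_0s_2s_0, s_1s_2s_1$ have been exhausted.  Concretely, writing $\widehat W = W\ltimes Q^{\vee}$, the translation part of any length-$\geq 4$ element of $\widehat W$ has $(\omega_1,\omega_2)$-coordinates whose absolute values exceed the $3\ell$ diameter of $H$, so every vertex of $w\cdot P_{\ell}$ lies outside $H$.  This rules out all length-$\geq 4$ words at once.

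The main obstacle is the clean treatment of the long-length case.  One must show precisely that ``returning close'' to $P_{\ell}$ in the affine arrangement is possible only via the three palindromic motions of length three, and that any longer reduced word necessarily corresponds to a translation that pushes the alcove out of $H$.  Making this rigorous requires a careful analysis of how each generator $s_i$ shifts the barycenter of an alcove, paired with an inductive argument on reduced expressions; alternatively (and in keeping with the symbolic-computation spirit of the rest of the paper), one can enumerate candidate $w$ up to some bounded length in \texttt{Macaulay2} together with the \texttt{Polyhedra} package and verify $w\cdot P_{\ell}\cap H=\emptyset$ case by case, combined with the translation-lattice bound above to handle the infinite tail.
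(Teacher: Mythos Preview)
Your bounding argument is essentially the paper's: your hexagon $H$ coincides exactly with the Minkowski sum $WP_\ell + P_\ell$ the paper uses (both are cut out by $-\ell \le x,\,y,\,x+y$ and $x,\,y,\,x+y \le 2\ell$). Where you diverge is in verifying that only the thirteen listed alcoves meet this region. The paper does this in one stroke by checking that the union of the thirteen alcoves $w\cdot P_\ell$ \emph{contains} $H$. Since the closed alcoves tile the plane with pairwise disjoint interiors, this single containment forces every other alcove's interior to miss $H$, so no other $w$ can contribute. There is no need to enumerate words by length or to invoke the translation lattice.

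The gap in your proposal is exactly where you flag it: the length-$\ge 4$ case. The claim that any such element has translation part large enough to push $w\cdot P_\ell$ outside $H$ is not justified, and is not straightforward to make precise---the position of $w\cdot P_\ell$ depends on both the finite-Weyl and translation parts of $w$, and Coxeter length does not bound the translation component from below in the way you suggest. Your fallback (computer enumeration of short words plus a translation bound for the tail) still rests on that same unproven bound. The clean fix is to abandon the word-by-word analysis and verify the single polyhedral containment $H\subseteq\bigcup_{w} w\cdot P_\ell$ over the thirteen listed $w$; this is a finite check of one hexagon against thirteen triangles and is exactly what the paper's figure records.
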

\begin{proof}
Let $WD(\lambda)$ denote the weight diagram of $\lambda$, and let
$WP_{\ell}$ denote the $W$-orbit of $P_{\ell}$.  We hope that this
clash of notation will not cause too much confusion.  

Since $\lambda \in P_{\ell}$ and the weight diagram is symmetric under
the Weyl group, we have $WD(\lambda) \subset WP_{\ell}$.  Since $\mu \in P_{\ell}$, we
have $WD(\lambda) +\mu$ is contained in the Minkowski sum $\subset WP_{\ell} + P_{\ell}$, and we check in turn that
the Minkowski sum is contained in the union of the 13 alcoves listed.
In Figure \ref{Minkowski sum} below, the regions $P_{\ell}$,
$WP_{\ell}$, and $\subset WP_{\ell} + P_{\ell}$ are shown in
increasingly lighter shades of green, respectively, and the 13 alcoves
are labeled.
\end{proof}

\begin{figure}[H]
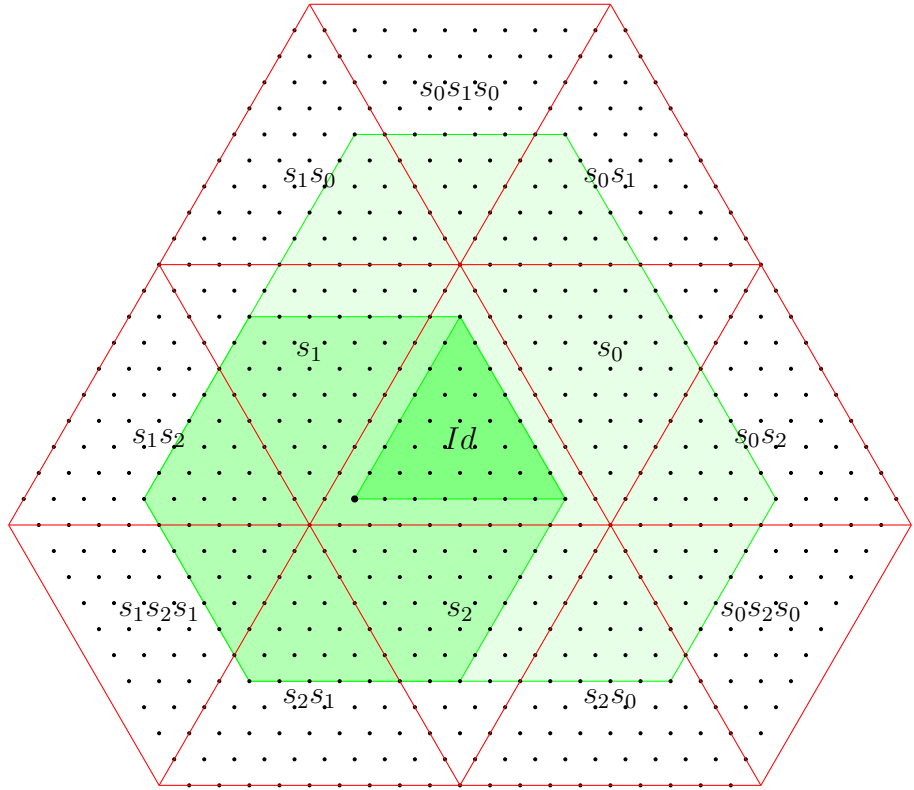
 
\caption{Alcoves contributing to the Kac-Walton algorithm}
\label{Minkowski sum}
\begin{center}
\include{MinkowskiSum}
\end{center}
\end{figure}
\subsection{Our \texttt{Macaulay2} types and functions}

We implemented two new types in \texttt{Macaulay2} called 
\texttt{ConeSupportedExpression} and
\texttt{ConeSupportedExpressionSet}.  These two types are highly specialized
for the calculations required here.  An object of type 
\texttt{ConeSupportedExpression} is a hash table recording an
expression and a cone on which it is supported.  An object of type
\texttt{ConeSupportedExpressionSet} is an unordered set of
\texttt{ConeSupportedExpression}s.  We assume that 
\begin{enumerate}
\item The dimension of each cone in each
  \texttt{ConeSupportedExpression} is equal to the dimension of the
  ambient vector space;
\item no two cones in a \texttt{ConeSupportedExpressionSet} have a
  full-dimensional intersection;
\item the union of the cones in a \texttt{ConeSupportedExpressionSet}
  is equal to the ambient vector space.  
\end{enumerate}

The multiplicity formula in Figure \ref{14 cone multiplicity
    formula} has these three properties, and hence can be implemented
  as an object of type \texttt{ConeSupportedExpressionSet}.

We implemented methods for adding two
\texttt{ConeSupportedExpressionSet}s and for multiplying a
\texttt{ConeSupportedExpressionSet} by a scalar.  

We also wrote a function \texttt{isUnionConvex} to decide whether the union
of several cones is convex.  One use of this function is to simplify a
\texttt{ConeSupportedExpressionSet}; if one nonzero expression is
supported on two or more cones, and the union of these cones is
convex, then we replace these cones by their union, yielding a
\texttt{ConeSupportedExpressionSet} containing fewer \texttt{ConeSupportedExpression}s.

\subsection{The main program}

We use the notation for roots and weights described in Section
\ref{BMW section}.

We begin with $\lambda = (a,b)$, $\mu = (c,d)$, and $\nu=(e,f)$.

For each word $w$ in the list of contributing alcoves in Lemma \ref{contributing alcoves lemma}, we compute
$w \cdot \nu -\mu= (x,y)$ and use the formulas in Figure \ref{14 cone multiplicity
    formula} to compute $m_{\lambda}(w \cdot \nu - \mu)$ as a
  \texttt{ConeSupportedExpressionSet}.  We then compute 
\[
N_{\lambda \mu}^{(\ell) \nu} = \sum_{w} \sgn(w) m_{\lambda}(w \cdot \nu
- \mu),
\]
simplifying the intermediate \texttt{ConeSupportedExpressionSet} after each addition or subtraction.  

The program takes approximately ten minutes to compute its answer.  It
finds 27 nonzero expressions supported on cones, and computes an
additional 82 cones supporting the expression 0.  

We checked that the 27 nonzero expressions we obtained and the cones
on which they are supported match the nonzero expressions and cones of B\'{e}gin, Mathieu, and Walton's
formula.  Since our program
computes its answer without using B\'{e}gin, Mathieu, and Walton's
formula along the way, we obtain a new, independent proof of Proposition \ref{main proposition}, first
proved by B\'{e}gin, Mathieu, and Walton in \cite{BMW}.  Notably, our proof does not
use the depth rule, which was used in \cite{BMW}.

%%%%%%%%%%%%%%%%%%%%%%%%%%%%%%%%%%%%%
\section*{References}
\begin{biblist}
\bib{Beauville}{article}{
   author={Beauville, Arnaud},
   title={Conformal blocks, fusion rules and the Verlinde formula},
   conference={
      title={},
      address={Ramat Gan},
      date={1993},
   },
   book={
      series={Israel Math. Conf. Proc.},
      volume={9},
      publisher={Bar-Ilan Univ.},
      place={Ramat Gan},
   },
   date={1996},
   pages={75--96},
   review={\MR{1360497 (97f:17025)}},
}

\bib{BMW}{article}{
   author={B{\'e}gin, L.},
   author={Mathieu, P.},
   author={Walton, M. A.},
   title={$\widehat{\rm su}(3)_k$ fusion coefficients},
   journal={Modern Phys. Lett. A},
   volume={7},
   date={1992},
   number={35},
   pages={3255--3265},
   issn={0217-7323},
   review={\MR{1191281 (93j:81028)}},
   doi={10.1142/S0217732392002640},
}

\bib{Polyhedra}{article}{
   author={Birkner, Ren\'{e}},
   title={\texttt{\upshape Polyhedra}: a package in
     \texttt{\upshape Macaulay2} for computations with convex polyhedra, cones, and fans},
   date={2010},
   note={Available on the Macaulay2 website},
}

\bib{FeingoldFredenhagen2008}{article}{
   author={Feingold, Alex J.},
   author={Fredenhagen, Stefan},
   title={A new perspective on the Frenkel-Zhu fusion rule theorem},
   journal={J. Algebra},
   volume={320},
   date={2008},
   number={5},
   pages={2079--2100},
   issn={0021-8693},
   review={\MR{2437644 (2009f:17042)}},
   doi={10.1016/j.jalgebra.2008.05.026},
}

\bib{FultonHarris}{book}{
   author={Fulton, William},
   author={Harris, Joe},
   title={Representation theory},
   series={Graduate Texts in Mathematics},
   volume={129},
   note={A first course;
   Readings in Mathematics},
   publisher={Springer-Verlag},
   place={New York},
   date={1991},
   pages={xvi+551},
   isbn={0-387-97527-6},
   isbn={0-387-97495-4},
   review={\MR{1153249 (93a:20069)}},
}

\bib{polymake}{article}{
   author={Gawrilow, Ewgenij},
   author={Joswig, Michael},
   title={\texttt{\upshape polymake}: a framework for analyzing convex polytopes},
   date={2012},
   note={Version 2.12, \neturl{http://www.math.tu-berlin.de/polymake/}},
}

\bib{GoodmanWallach}{book}{
   author={Goodman, Roe},
   author={Wallach, Nolan R.},
   title={Symmetry, representations, and invariants},
   series={Graduate Texts in Mathematics},
   volume={255},
   publisher={Springer, Dordrecht},
   date={2009},
   pages={xx+716},
   isbn={978-0-387-79851-6},
   review={\MR{2522486 (2011a:20119)}},
   doi={10.1007/978-0-387-79852-3},
}

\bib{Macaulay2}{article}{
   author={Grayson, Dan},
   author={Stillman, Mike},
   title={\texttt{\upshape Macaulay2}: a software system for research
     in algebraic geometry},
   date={2014},
   note={Version 1.6, \neturl{http://www.math.uiuc.edu/Macaulay2/}},
}

\bib{Humphreys}{book}{
   author={Humphreys, James E.},
   title={Introduction to Lie algebras and representation theory},
   series={Graduate Texts in Mathematics},
   volume={9},
   note={Second printing, revised},
   publisher={Springer-Verlag, New York-Berlin},
   date={1978},
   pages={xii+171},
   isbn={0-387-90053-5},
   review={\MR{499562 (81b:17007)}},
}

\bib{Kac}{book}{
   author={Kac, Victor G.},
   title={Infinite-dimensional Lie algebras},
   edition={3},
   publisher={Cambridge University Press, Cambridge},
   date={1990},
   pages={xxii+400},
   isbn={0-521-37215-1},
   isbn={0-521-46693-8},
   review={\MR{1104219 (92k:17038)}},
   doi={10.1017/CBO9780511626234},
}

\bib{KorffStroppel}{article}{
   author={Korff, Christian},
   author={Stroppel, Catharina},
   title={The $\widehat{\germ{sl}}(n)_k$-WZNW fusion ring: a
   combinatorial construction and a realisation as quotient of quantum
   cohomology},
   journal={Adv. Math.},
   volume={225},
   date={2010},
   number={1},
   pages={200--268},
   issn={0001-8708},
   review={\MR{2669352 (2012a:17022)}},
   doi={10.1016/j.aim.2010.02.021},
}

\bib{MorseSchilling2012}{article}{
   author={Morse, Jennifer},
   author={Schilling, Anne},
   title={A combinatorial formula for fusion coefficient},
   date={2012},
   eprint={http://arxiv.org/abs/1207.0786},
}

\bib{SchillingShimozono2001}{article}{
   author={Schilling, Anne},
   author={Shimozono, Mark},
   title={Fermionic formulas for level-restricted generalized Kostka
   polynomials and coset branching functions},
   journal={Comm. Math. Phys.},
   volume={220},
   date={2001},
   number={1},
   pages={105--164},
   issn={0010-3616},
   review={\MR{1882402 (2003k:05140)}},
   doi={10.1007/s002200100443},
}

\bib{FourierMotzkin}{article}{
   author={Smith, Greg},
   title={\texttt{\upshape FourierMotzkin}: a package in
     \texttt{\upshape Macaulay2} for convex hull and vertex enumeration},
   date={2008},
   note={Available on the Macaulay2 website},
}

\bib{LieTypes}{article}{
   author={Swinarski, David},
   title={\texttt{\upshape LieTypes}: a package in \texttt{\upshape Macaulay2} for
     calculations related to Lie algebras},
   date={2014},
   note={\neturl{http://faculty.fordham.edu/dswinarski/}},
}

\bib{Tudose2002}{book}{
   author={Tudose, Geanina},
   title={On the combinatorics of sl(n)-fusion algebra},
   note={Thesis (Ph.D.)--York University (Canada)},
   publisher={ProQuest LLC, Ann Arbor, MI},
   date={2002},
   pages={99},
   isbn={978-0612-72015-2},
   review={\MR{2703805}},
}

\bib{Walton}{article}{
   author={Walton, Mark A.},
   title={Algorithm for WZW fusion rules: a proof},
   journal={Phys. Lett. B},
   volume={241},
   date={1990},
   number={3},
   pages={365--368},
   issn={0370-2693},
   review={\MR{1055061 (91k:81180a)}},
   doi={10.1016/0370-2693(90)91657-W},
}

\end{biblist}

\end{document}